\begin{document}

\newtheorem{theorem}{Theorem}[section]
\newtheorem{definition}{Definition}[section]
\newtheorem{corollary}[theorem]{Corollary}
\newtheorem{lemma}[theorem]{Lemma}
\newtheorem{proposition}[theorem]{Proposition}
\newtheorem{step}[theorem]{Step}
\newtheorem{example}[theorem]{Example}
\newtheorem{remark}[theorem]{Remark}

\font\sixbb=msbm6
\font\eightbb=msbm8
\font\twelvebb=msbm10 scaled 1095
\newfam\bbfam
\textfont\bbfam=\twelvebb \scriptfont\bbfam=\eightbb
                           \scriptscriptfont\bbfam=\sixbb

\newcommand{\tr}{{\rm tr \,}}
\newcommand{\linspan}{{\rm span\,}}
\newcommand{\rank}{{\rm rank\,}}
\newcommand{\diag}{{\rm Diag\,}}
\newcommand{\Image}{{\rm Im\,}}
\newcommand{\Ker}{{\rm Ker\,}}

\def\bb{\fam\bbfam\twelvebb}
\newcommand{\enp}{\begin{flushright} $\Box$ \end{flushright}}
\def\cD{{\mathcal{D}}}
\def\N{\bb N}

\title{Maps on Grassmann spaces preserving the minimal principal angle
\thanks{The author was supported by grants J1-2454, and P1-0288 from ARRS, Slovenia.}}
\author
{Peter \v Semrl\footnote{Institute of Mathematics, Physics, and Mechanics, Jadranska 19, SI-1000 Ljubljana, Slovenia, peter.semrl@fmf.uni-lj.si} 
       }

\date{}
\maketitle

\begin{abstract}
Let $n$ be a positive integer and $H$ a Hilbert space. The description of the general form of bijective maps on the set of $n$-dimensional subspaces of $H$ preserving the maximal principal angle has been obtained recently. This is a generalization of Wigner's unitary-antiunitary theorem. In this paper we will obtain another extension of Wigner's theorem in which the maximal principal angle is replaced by the minimal one. Moreover, in this case we do not need the bijectivity assumption.
\end{abstract}
\maketitle

\bigskip
\noindent AMS classification: 47B49.

\bigskip
\noindent
Keywords: Grassmann space; Hilbert space; orthogonal projection; principal angles.


\section{Introduction and statement of the main results}

Let $H$ be a Hilbert space and $n$ a positive integer. We denote by $P_n (H)$ the set of all rank $n$ projections on $H$. In the case when $H$  
is an infinite-dimensional separable Hilbert space, the symbol $P_\infty (H)$ stands for the set of all projections whose image and kernel are both infinite-dimensional. By $\| \cdot \|$
we denote the usual operator norm on $B(H)$, the set of all bounded linear operators on $H$. The distance on $P(H)$, the set of all projections on $H$, induced by the
operator norm is usually called the gap metric.

Throughout this paper we will always identify projections of rank $n$ with $n$-dimensional subspaces:  to each $P \in P_n (H)$ we associate the image ${\rm Im}\, P$ of the projection $P$.
If we take two subspaces $P,Q \in P_n (H)$ then the relative subspace position of $P$ and $Q$ can be completely described by the set of $n$ principal angles.
The notion of principal angles was first investigated by Jordan, and has a wide range of applications in mathematical statistics, geometry, etc. 
The sequence of $n$ principal angles $0 \le \theta_1 \le \theta_2 \le \ldots \le \theta_n \le \pi / 2$ can be defined in various ways. We will present here a recursive definition. For any two vectors $x,y \in H$ of norm one we define the angle
between $x$ and $y$ by
$$
\angle (x,y) = \arccos | \langle x, y \rangle | \in \left[ 0, {\pi \over 2} \right].
$$
Then the first principal angle is defined by
$$
\theta_ 1 = \min \{ \angle (x,y) \, : \, x \in {\rm Im}\, P , \, y \in {\rm Im}\, Q , \, \| x \| = \| y \| = 1 \} = \angle (x_1 , y_1 ).
$$
The unit vectors $x_1$ and $y_1$ are the corresponding principal vectors (note that they are not uniquely determined). 
The other principal angles and vectors are then defined recursively by
$$
\theta_ k  = \min \{ \angle (x,y) \, : \, x \in {\rm Im}\, P , \, y \in {\rm Im}\, Q , \, \| x \| = \| y \| = 1, $$ $$ x \perp x_j \, {\rm and} \, y \perp y_j \ \, \forall j \in \{ 1, \ldots , k-1\}  \} = \angle (x_k , y_k ).
$$
We recall that the sines of the non-zero principal angles are exactly the non-zero singular values of the operator $P-Q$, each of them counted twice (see e.g.  (ii) of \cite[Theorem 26]{Gal}). 
In particular, $\|P-Q\|$ is the sine of the largest principal angle. 

A linear isometry $U : H \to H$ is a linear map satisfying $\langle Ux, Uy \rangle = \langle x,y \rangle$, $x,y \in H$, and a conjugate-linear  isometry $U : H \to H$ is a 
conjugate-linear map satisfying $\langle Ux, Uy \rangle = \langle y,x \rangle$, $x,y \in H$. A surjective linear isometry is a unitary operator, while a surjective
conjugate-linear isometry is called an antiunitary operator. 

In mathematical foundations of quantum mechanics the Grassmann space $P_1(H)$ is used to represent the set of pure states of a quantum system.
The famous Wigner's unitary-antiunitary theorem describes the general form of transformations of the set of all pure states which preserve the transition probability.
Recall  that when we represent pure states with projections of rank one, then the transition probability between the states $P$ and $Q$ is $\tr (PQ)$, the trace of the product $PQ$.
It is an elementary linear algebra exercise to show that $\|P-Q\| = \sqrt{1 - \tr (PQ)}$ holds true for every pair $P,Q \in P_1 (H)$. Hence, Wigner's theorem can be interpreted either as the structural result for
isometries of $P_1 (H)$ or as the structural result for transformations on $P_1(H)$ preserving the (unique principal) angle between elements of $P_1(H)$.
The exact formulation reads as follows. Assume that $\phi : P_1(H) \to P_1 (H)$, $\dim H \ge 2$, is a map satisfying $\| \phi (P) - \phi (Q) \| = \| P - Q \|$, $P, Q \in P_1 (H)$. Then there
exists a linear or conjugate-linear isometry $U : H \to H$ such that
\begin{equation}\label{stan}
\phi (P) = UPU^\ast
\end{equation}
for all $P \in P_1 (H)$. Every map of the form (\ref{stan}) will be called a standard map.  Note that under the additional assumption that $\phi$ is surjective, the operator $U$ appearing
in (\ref{stan}) is either a unitary or an antiunitary operator.

In \cite{BJM}, 
Botelho,  Jamison, and  Moln\'ar described the general form of surjective isometries of $P_n(H)$ under some dimensionality constraints.
Geh\' er and the author of the present paper
succeeded to extend this result to all possible dimensions \cite{GeS}. 
They proved that if  $\dim H > n$ and $\dim H \neq 2n$, then  
every surjective isometry $\phi\colon P_n (H) \to P_n (H)$ is standard.
In the case when $\dim H = 2n$,  we have the additional possibility that $\phi$ is a standard map composed with the orthogonal complementation $P \mapsto I-P$, $P\in P_n (H)$.
In their recent paper \cite{GeS2} they solved the remaining case of $P_\infty (H)$. They assumed that $H$ is an infinite-dimensional separable Hilbert 
space and $\phi\colon P_\infty (H) \to P_\infty (H)$ a surjective isometry and proved that then $\phi$ must be either a standard map or 
 a standard map composed by the orthogonal complementation.
On one hand, one can consider all these results as far reaching generalizations of the classical Wigner's theorem, but on the other hand, one can observe an essential difference: the improved version of 
Wigner's theorem formulated above describes the general form of not necessarily surjective isometries of $P_1 (H)$, while the proofs of the results mentioned in this paragraph depend heavily on the surjectivity
assumption. 

So far, two generalizations of Wigner's theorem have been obtained without using the surjectivity assumption but under different preserving properties involving all principal angles, not just the ``largest" one as in the 
previous paragraph (note that we put quotation marks because of $P_\infty (H)$ - in this case we can not speak of the maximal angle between unit vectors from two subspaces, but rather of the supremum of all possible angles).
In \cite{Mo} and \cite{Mo2},  Moln\'ar characterized (not necessarily surjective) transformations of $P_n(H)$ which preserve the complete system of principal angles. 
Again, these transformations are standard except in the case when $\dim H=2n$ and in this special case we have the additional possibility of a standard map composed by the orthogonal complementation.
This theorem has been improved by Geh\' er \cite{Geh} who considered (not necessarily surjective) transformations of $P_n(H)$ which preserve the  ``transition probability" between projections $P, Q \in P_n (H)$
defined as the trace of the product $PQ$. He showed that all such maps are standard except in the case when $\dim H = 2n$ where we have the usual additional possibility. It should be mentioned here
that for any pair of projections $P,Q \in P_n (H)$ we have
$$
\tr (PQ) = \sum_{k=1}^n \cos^2 \theta_k ,
$$
where $\theta_1 , \ldots , \theta_n$ is the complete system of principal angles between $P$ and $Q$.

In the above two paragraphs we have described several generalizations of Wigner's theorem that serve as a motivation for our research. We will deal with non-surjective generalizations of Wigner's theorem
(as in the second paragraph) preserving a single principal angle as in the first paragraph - but this time we will be interested in the ``smallest" principal angle, not the ``largest" one. 

We start with the following trivial observation.

\begin{proposition}\label{lako}
Let $H$ be an infinite-dimensional separable Hilbert space and $\phi : P_\infty (H) \to P_\infty (H)$ a surjective map such that
$$
\inf   \{ \angle (x,y) \, : \, x \in {\rm Im}\, P , \, y \in {\rm Im}\, Q , \, \| x \| = \| y \| = 1 \} $$ $$= 
\inf   \{ \angle (x,y) \, : \, x \in {\rm Im}\, \phi (P) , \, y \in {\rm Im}\, \phi (Q) , \, \| x \| = \| y \| = 1 \}
$$
for every $P,Q \in P_\infty (H)$. Then there exists a unitary or antiunitary operator $U : H \to H$ such that
$$
\phi (P) = UPU^\ast
$$
for all $P \in P_\infty (H)$.
\end{proposition}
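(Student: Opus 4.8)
The plan is to reduce the statement, via the single relation ``the minimal principal angle equals $\pi/2$'', to an assertion about orthogonality-preserving bijections of $P_\infty(H)$, and then to settle that assertion. To begin, one records the elementary fact that for $P,Q\in P_\infty(H)$ the infimum displayed in the statement equals $\pi/2$ if and only if ${\rm Im}\,P\perp{\rm Im}\,Q$, equivalently $PQ=0$: since $\angle(x,y)\le\pi/2$ always, the infimum (over a nonempty set) equals $\pi/2$ exactly when $\langle x,y\rangle=0$ for all unit $x\in{\rm Im}\,P$, $y\in{\rm Im}\,Q$; and if $PQ\neq 0$, picking $v$ with $PQv\neq 0$ and normalizing $PQv\in{\rm Im}\,P$ and $Qv\in{\rm Im}\,Q$ produces unit vectors with nonzero inner product. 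Hence the hypothesis forces $P\perp Q\iff\phi(P)\perp\phi(Q)$ for all $P,Q\in P_\infty(H)$; that is, $\phi$ both preserves and reflects orthogonality.

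Next I would promote surjectivity to bijectivity. The crucial point is that an element of $P_\infty(H)$ is determined by the set of elements of $P_\infty(H)$ orthogonal to it: if ${\rm Im}\,P\neq{\rm Im}\,Q$, then after possibly interchanging $P$ and $Q$ there is a unit vector $u\in{\rm Im}\,P$ with $(I-Q)u\neq 0$, and since $({\rm Im}\,Q)^\perp$ is infinite dimensional and of infinite codimension one can choose $R\in P_\infty(H)$ with ${\rm Im}\,R\subseteq({\rm Im}\,Q)^\perp$ and $(I-Q)u\in{\rm Im}\,R$; then $R\perp Q$, whereas $\langle (I-Q)u,u\rangle=\|(I-Q)u\|^2>0$ shows $R\not\perp P$. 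Combined with the previous paragraph, $\phi(P)=\phi(Q)$ would make $P$ and $Q$ orthogonal to exactly the same elements of $P_\infty(H)$, hence $P=Q$; so $\phi$ is bijective, and $\phi^{-1}$ again preserves and reflects orthogonality. The same type of argument shows that the inclusion order on $P_\infty(H)$ is definable from orthogonality (${\rm Im}\,P\subseteq{\rm Im}\,Q$ iff every $R\in P_\infty(H)$ orthogonal to $Q$ is orthogonal to $P$), so $\phi$ is in addition an order automorphism of $(P_\infty(H),\subseteq)$ and commutes with $P\mapsto I-P$.

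It then remains to show that a bijection $\phi$ of $P_\infty(H)$ preserving and reflecting orthogonality has the form $\phi(P)=UPU^\ast$ for a unitary or antiunitary $U$; I expect this to be the real content. One option is to quote a known structure theorem for orthogonality preservers on $P_\infty(H)$. A more self-contained route is to extend $\phi$ to the full lattice $\mathcal L(H)$ of closed subspaces of $H$: for $N$ with $\dim N=\infty$ set $\widehat\phi(N)=\bigvee\{\phi(P):P\in P_\infty(H),\ {\rm Im}\,P\subseteq N\}$ (the join being over a nonempty family whose images have closed linear span $N$), and for $\dim N<\infty$ set $\widehat\phi(N)=\widehat\phi(N^\perp)^\perp$; one then checks that $\widehat\phi$ is an order automorphism of $\mathcal L(H)$ commuting with orthocomplementation, and applies a Fillmore--Longstaff / fundamental-theorem-of-projective-geometry type coordinatization of subspace-lattice automorphisms together with the orthogonality constraint to obtain the desired $U$, whence $\phi(P)=UPU^\ast$ on $P_\infty(H)$.

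The main obstacle, as indicated, is this last step. In the self-contained approach the delicate points are verifying that $\widehat\phi$ is genuinely a lattice automorphism — in particular that finite-dimensional and finite-codimensional subspaces, which are not themselves visible inside $P_\infty(H)$, are handled correctly by the prescription above, and that $\widehat{\phi^{-1}}$ defined the same way inverts $\widehat\phi$ — together with the passage from a bounded invertible semilinear implementation to a unitary or antiunitary one (here orthogonality preservation forces the implementing semilinear bijection to be a scalar multiple of a unitary or antiunitary map, and the scalar is irrelevant at the level of subspaces).
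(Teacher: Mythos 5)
Your proposal is correct and follows essentially the same route as the paper: reduce the hypothesis to the statement that $\phi$ preserves orthogonality in both directions (minimal angle equal to $\pi/2$ iff $P\perp Q$), prove injectivity by exhibiting a projection orthogonal to one of two distinct elements but not the other, conclude bijectivity from the surjectivity assumption, and then invoke the known structure theorem for orthogonality preservers on $P_\infty(H)$, which is exactly what the paper does by citing \cite{Sem}. Your alternative ``self-contained'' lattice-extension sketch is not needed and is not carried out in the paper, but the primary argument you give matches the paper's proof.
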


The proof of this statement is very easy. All one needs to do is to verify that $\phi$ is actually bijective and to observe that under our assumptions $\phi$ preserves orthogonality of projections. Then the
result follows directly from the known structural result on the orthogonality preservers on $P_\infty (H)$ \cite{Sem}. We stated this trivial result to emphasize an interesting difference between $P_n (H)$ and $P_\infty (H)$.
Namely, while the assumption of surjectivity is essential in the above result we can get the desired structural result for preservers of minimal principal angle on $P_n(H)$ without this assumption.
More precisely, we can prove the following two statements.

\begin{proposition}\label{manje}
Let $H$ be an infinite-dimensional separable Hilbert space. Then there exists a non-standard  map $\phi : P_\infty (H) \to P_\infty (H)$ satisfying
$$
\inf   \{ \angle (x,y) \, : \, x \in {\rm Im}\, P , \, y \in {\rm Im}\, Q , \, \| x \| = \| y \| = 1 \} $$ $$= 
\inf   \{ \angle (x,y) \, : \, x \in {\rm Im}\, \phi (P) , \, y \in {\rm Im}\, \phi (Q) , \, \| x \| = \| y \| = 1 \}
$$
for every $P,Q \in P_\infty (H)$.
\end{proposition}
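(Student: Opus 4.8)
The plan is to produce one explicit example, built by a ``doubling'' trick. Since $H$ is infinite-dimensional and separable it is unitarily equivalent to $H\oplus H$; fix a unitary $V\colon H\oplus H\to H$ and set
$$
\phi(P)=V\,(P\oplus P)\,V^{\ast},\qquad P\in P_\infty(H).
$$
The first thing I would check is that this is a well-defined self-map of $P_\infty(H)$: for $P\in P_\infty(H)$ the projection $P\oplus P$ on $H\oplus H$ has image $\mathrm{Im}\,P\oplus\mathrm{Im}\,P$ and kernel $\mathrm{Ker}\,P\oplus\mathrm{Ker}\,P$, both infinite-dimensional, and this is unchanged under conjugation by the unitary $V$.

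Next I would verify the preservation property. For closed subspaces $M,N\subseteq H$ with associated orthogonal projections $P_M,P_N$ one has $\sup\{\,|\langle x,y\rangle|:x\in M,\ y\in N,\ \|x\|=\|y\|=1\,\}=\|P_MP_N\|$, and since in the present notation the projection onto $\mathrm{Im}\,P$ is $P$ itself, the infimum of $\angle(x,y)$ appearing in the statement equals $\arccos\|PQ\|$, and similarly for $\phi(P),\phi(Q)$. As $\arccos$ is injective on $[0,\pi/2]$ it therefore suffices to check $\|\phi(P)\phi(Q)\|=\|PQ\|$ for all $P,Q\in P_\infty(H)$, and this is immediate: $\phi(P)\phi(Q)=V\big((PQ)\oplus(PQ)\big)V^{\ast}$, and the operator norm is unchanged both by unitary conjugation and by the ampliation $A\mapsto A\oplus A$.

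The only real content is that $\phi$ is not standard, and here I would argue by a rank count. Suppose toward a contradiction that $\phi(P)=UPU^{\ast}$ for all $P\in P_\infty(H)$, where $U\colon H\to H$ is a linear or conjugate-linear isometry. Choose closed subspaces $M\subsetneq N$ of $H$ such that $M$, $N$, $M^{\perp}$ and $N^{\perp}$ are all infinite-dimensional and $\dim(N\ominus M)=1$ (for instance, with an orthonormal basis $\{e_k\}_{k\ge1}$ take $M=\overline{\mathrm{span}}\{e_{2k}:k\ge1\}$ and $N=M+\mathbb{C}e_1$), and let $P,Q\in P_\infty(H)$ be the projections onto $M$ and $N$. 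Then $Q-P$ is a rank-one projection, so $\phi(Q)-\phi(P)=U(Q-P)U^{\ast}$ is again a rank-one projection, its image being the one-dimensional space $U(\mathrm{Im}(Q-P))$. On the other hand $\phi(Q)-\phi(P)=V\big((Q-P)\oplus(Q-P)\big)V^{\ast}$ is unitarily equivalent to $(Q-P)\oplus(Q-P)$, a projection of rank two --- a contradiction.

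I expect no serious obstacle here: the difficulty is entirely in hitting upon the right construction, after which every verification is short, the rank argument being the decisive one. The one place requiring mild care is making sure the subspaces used in the contradiction genuinely lie in $P_\infty(H)$, i.e.\ that both they and their orthogonal complements are infinite-dimensional, which is exactly why the explicit choice above is convenient.
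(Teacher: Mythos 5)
Your construction is correct, and it is at heart the same ``doubling'' idea the paper uses: identify $H$ with $H\oplus H$ and send $P$ to a block-diagonal projection. The paper does this with $\phi(P)=P\oplus\varphi(P)$ for an arbitrary selector $\varphi$ with $0\neq\varphi(P)\le P$, verifies angle preservation via its Lemma~\ref{padejmo} together with the monotonicity inequality ${\rm ma}\,(\varphi(P),\varphi(Q))\ge{\rm ma}\,(P,Q)$, and obtains non-standardness by choosing $\varphi$ non-monotone (only sketched there). You instead take the single instance $\varphi(P)=P$, i.e.\ $\phi(P)=V(P\oplus P)V^\ast$, prove preservation directly from the elementary identity ${\rm ma}\,(P,Q)=\arccos\|PQ\|$ together with invariance of the norm under ampliation and unitary conjugation (so you do not need Lemma~\ref{padejmo} at all), and prove non-standardness by a completely explicit rank count: for $P\le Q$ with $Q-P$ of rank one, a standard map would send $Q-P$ to $U(Q-P)U^\ast$ of rank at most one (this bound is what matters, and it holds for conjugate-linear $U$ as well, since $U$ maps a one-dimensional complex subspace onto a one-dimensional complex subspace), while your $\phi(Q)-\phi(P)$ has rank two. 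Note that your choice $\varphi(P)=P$ is monotone, so the paper's suggested route to non-standardness would not apply to it; your rank argument is what makes this particular instance work, and it yields a fully explicit, self-contained example, whereas the paper's formulation exhibits a whole family of non-standard examples at the cost of leaving the final non-standardness check to the reader.
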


Before formulating our main theorem we introduce the notion of the minimal angle between two finite rank projections.

\begin{definition}\label{zak}
Let $P,Q \in P_n(H)$. Then the minimal angle between $P$ and $Q$ is defined  to be the first principal angle, that is,
$$
{\rm ma}\, (P,Q) = \min  \{ \angle (x,y) \, : \, x \in {\rm Im}\, P , \, y \in {\rm Im}\, Q , \, \| x \| = \| y \| = 1 \} \in \left[ 0, {\pi \over 2} \right].
$$
\end{definition}

\begin{theorem}\label{main}
Let $H$ be a Hilbert space, $n$ a positive integer such that $\dim H \ge 2n$, and $\phi : P_n (H) \to P_n (H)$ a map satisfying
$$
{\rm ma}\, (\phi (P ), \phi (Q)) = {\rm ma}\, (P,Q)
$$ 
for all $P,Q \in P_n (H)$. Then there exists a linear or conjugate-linear isometry $U : H \to H$ such that
$$
\phi (P) = UPU^\ast
$$
for every $P \in P_n (H)$, or $\dim H = 2n$ and
$$
\phi (P) = U(I-P)U^\ast
$$
for every $P \in P_n (H)$.
\end{theorem}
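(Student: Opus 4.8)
The plan is to extract from the hypothesis two incidence relations that $\phi$ preserves in both directions, feed these into a Chow-type fundamental theorem for Grassmann spaces, and then upgrade the resulting semilinear map to a (conjugate-)linear isometry. For $n=1$ the statement is exactly the improved Wigner theorem recalled in the introduction, so assume $n\ge 2$. For unit vectors $\angle(x,y)=\pi/2$ iff $\langle x,y\rangle=0$ and $\angle(x,y)=0$ iff $y$ is a unimodular multiple of $x$; hence ${\rm ma}\,(P,Q)=\pi/2$ iff ${\rm Im}\,P\perp{\rm Im}\,Q$, while ${\rm ma}\,(P,Q)=0$ iff ${\rm Im}\,P\cap{\rm Im}\,Q\neq\{0\}$, and both of these relations, being level sets of ${\rm ma}$, are preserved by $\phi$ together with their negations. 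From orthogonality-preservation alone I would first deduce that $\phi$ is injective: if $\phi(P)=\phi(Q)$ then $P$ and $Q$ are orthogonal to exactly the same elements of $P_n(H)$, so $({\rm Im}\,P)^\perp$ and $({\rm Im}\,Q)^\perp$, which have dimension at least $n$ since $\dim H\ge 2n$, contain the same $n$-dimensional subspaces and therefore coincide.

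The core step is to show that, when $\dim H\ge 2n+1$, $\phi$ preserves adjacency in both directions, where $P\simeq Q$ means $\dim({\rm Im}\,P\cap{\rm Im}\,Q)=n-1$, equivalently $\dim({\rm Im}\,P+{\rm Im}\,Q)=n+1$. The key point is that adjacency is definable from the two relations above. Indeed ${\rm Im}\,X\subseteq({\rm Im}\,P+{\rm Im}\,Q)^\perp$ holds iff ${\rm ma}\,(X,P)={\rm ma}\,(X,Q)=\pi/2$, so the family $\mathcal{A}(P,Q)$ of all $R\in P_n(H)$ with ${\rm Im}\,R$ orthogonal to both ${\rm Im}\,P$ and ${\rm Im}\,Q$ is ${\rm ma}$-intrinsic; and when $\dim H\ge 2n+1$ one checks that ${\rm Im}\,R+{\rm Im}\,S\subseteq{\rm Im}\,P+{\rm Im}\,Q$ holds exactly when $\mathcal{A}(P,Q)\subseteq\mathcal{A}(R,S)$ — any vector in $({\rm Im}\,P+{\rm Im}\,Q)^\perp$ lying outside $({\rm Im}\,R+{\rm Im}\,S)^\perp$ extends to an $n$-dimensional subspace inside $({\rm Im}\,P+{\rm Im}\,Q)^\perp$ but not inside $({\rm Im}\,R+{\rm Im}\,S)^\perp$. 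Since two distinct projections span a subspace of dimension $\ge n+1$, with equality precisely for adjacent pairs, and since a span of dimension $\ge n+2$ always contains a smaller span (realized by a pencil), this yields: $P\simeq Q$ iff $P\neq Q$, ${\rm ma}\,(P,Q)=0$, and there is no pair $R\neq S$ with ${\rm ma}\,(R,S)=0$ and $\mathcal{A}(P,Q)\subsetneq\mathcal{A}(R,S)$. This description uses only ${\rm ma}$-values, and $\phi$ is injective, so $\phi$ preserves $\simeq$ in both directions.

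With adjacency preservation established, I would invoke the fundamental theorem of the geometry of Grassmann spaces in its form for not-necessarily-bijective adjacency preservers (Chow's theorem and its extensions): since $n\ge 2$ and $\dim H\ge 2n+1$, there is no ``duality'' alternative, so $\phi$ is induced by a semilinear injection $T\colon H\to H$, i.e. ${\rm Im}\,\phi(P)=T({\rm Im}\,P)$ for all $P$. To make $T$ an isometry I would use orthogonality once more: given $x\perp y$, choose $n$-dimensional subspaces $x\in M\subseteq y^\perp$ and $y\in N\subseteq M^\perp$ (possible since $\dim H\ge 2n$); then $M\perp N$, hence $TM\perp TN$, hence $Tx\perp Ty$, so $T$ sends orthogonal vectors to orthogonal vectors. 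For an orthonormal basis $(e_i)$ the vectors $Te_i$ are then pairwise orthogonal; they have a common norm because $e_i+e_j\perp e_i-e_j$; and testing $e_i+ce_j\perp \bar c\,e_i-e_j$ forces the field automorphism $\sigma$ attached to $T$ to satisfy $\sigma(\bar c)=\overline{\sigma(c)}$, so $\sigma$ maps $\mathbb{R}$ into itself, hence fixes $\mathbb{R}$ pointwise, hence is the identity or complex conjugation. Therefore $T=\lambda U$ with $U$ a linear or conjugate-linear isometry, and ${\rm Im}\,\phi(P)=U({\rm Im}\,P)$, that is, $\phi(P)=UPU^\ast$.

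The step I expect to be the genuine obstacle is the remaining case $\dim H=2n$. There the argument above breaks at the start: when $\dim H=2n$ every $P$ has a unique orthocomplement in $P_n(H)$, so the condition ${\rm ma}\,(X,P)=\pi/2$ merely says $X={\rm Im}\,(I-P)$ and the family $\mathcal{A}(P,Q)$ no longer detects the span of the pair. What compensates is that $\phi$ must then commute with orthocomplementation, $\phi(P^\perp)=\phi(P)^\perp$, and since for $\dim H=2n$ one has ${\rm ma}\,(P^\perp,Q)=\pi/2-\theta_n(P,Q)$, with $\theta_n$ the largest principal angle, $\phi$ also preserves $\theta_n$, i.e. the gap distance $\|P-Q\|$. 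I would try to recover adjacency-preservation from the simultaneous preservation of the smallest and the largest principal angle — using, where needed, further ${\rm ma}$-configurations built from the complementation symmetry, which is where the non-extreme values of ${\rm ma}$ really matter — and then run the Chow-plus-upgrade argument once more, now allowing the duality branch, which is exactly what produces the alternative $\phi(P)=U(I-P)U^\ast$. Carrying this out uniformly in $n$ is the technical crux.
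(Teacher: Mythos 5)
Your reduction to the two extreme level sets of ${\rm ma}$ (orthogonality and non-trivial intersection, both preserved in both directions) and the injectivity argument are fine, and the final upgrade from a semilinear inducing map to a (conjugate-)linear isometry is essentially the same as in the paper. The genuine gap is the core step, adjacency preservation for $\dim H\ge 2n+1$. Your criterion for $P\simeq Q$ contains a universally quantified, negative clause (``there is no pair $R\neq S$ with ${\rm ma}\,(R,S)=0$ and $\mathcal{A}(P,Q)\subsetneq\mathcal{A}(R,S)$''), and $\phi$ is not assumed surjective, so this clause does not transfer from $(P,Q)$ to $(\phi(P),\phi(Q))$: a violating pair on the image side need not lie in the range of $\phi$, and even the inclusion $\mathcal{A}(P,Q)\subseteq\mathcal{A}(R,S)$ cannot be pushed forward, because elements of $\mathcal{A}(\phi(P),\phi(Q))$ need not be images of elements of $\mathcal{A}(P,Q)$. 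In infinite dimension $\phi$ really need not be surjective (the inducing $U$ is only an isometry), so this cannot be repaired by first proving surjectivity. (A smaller inaccuracy: the asserted equivalence ``${\rm Im}\,R+{\rm Im}\,S\subseteq{\rm Im}\,P+{\rm Im}\,Q$ iff $\mathcal{A}(P,Q)\subseteq\mathcal{A}(R,S)$'' fails when $\mathcal{A}(P,Q)=\emptyset$, e.g. $\dim H=2n+1$ and $\dim({\rm Im}\,P+{\rm Im}\,Q)\ge n+2$; your final criterion happens to survive this, but the transfer problem above is fatal.) Relatedly, invoking ``Chow's theorem and its extensions'' for non-bijective adjacency preservers alone is not legitimate: without surjectivity there exist degenerate adjacency preservers, and one needs adjacency together with orthogonality, i.e. Pankov's theorem, which is what the paper uses. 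The paper avoids your definability argument entirely: in the infinite-dimensional case it proves forward adjacency preservation by a purely existential construction (Lemma 2.2), using families of pairwise orthogonal auxiliary $n$-dimensional subspaces (this is where $\dim H=\infty$ enters) to build a well-defined induced map $\psi$ on $P_1(H)$ with $[x]\subset U\Rightarrow\psi([x])\subset\phi(U)$, and then exhibits $n-1$ orthogonal lines inside $\phi(U)\cap\phi(V)$ when $U,V$ are adjacent; for $2n<\dim H<\infty$ no adjacency argument is needed at all, since Pankov's Theorem 2 already classifies two-sided orthogonality preservers.

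The second gap is that the case $\dim H=2n$, which is precisely where the alternative $\phi(P)=U(I-P)U^\ast$ arises, is only a plan in your write-up, not a proof. The paper's treatment there is quite different from what you sketch: ${\rm ma}$-preservation yields continuity of $\phi$ (if $R_k\to P$ then ${\rm ma}\,(R_k,I-P)\to\pi/2$, forcing $\phi(R_k)\to\phi(P)$), injectivity plus invariance of domain plus compactness and connectedness of $P_n(H)$ then give bijectivity, and since ${\rm ma}\,(P,Q)\neq 0$ exactly when ${\rm Im}\,P$ and ${\rm Im}\,Q$ are complementary, the Blunck--Havlicek theorem on complementarity preservers applies and produces the two semilinear alternatives, which are then upgraded to a unitary/antiunitary as in your last step. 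As it stands, both the adjacency step for $\dim H>2n$ and the whole $\dim H=2n$ case are unproved in your proposal.
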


In the special case when $n=1$ and $\dim H = 2$ every map $\phi : P_1 (H) \to P_1 (H)$ satisfying ${\rm ma}\, (\phi (P ), \phi (Q)) = {\rm ma}\, (P,Q)$, 
$P,Q \in P_n (H)$, is of the form $\phi (P) = UPU^\ast$,
$P \in P_n (H)$, for some unitary or antiunitary operator $U: H \to H$. In other words, in this special case the second possibility in the conclusion of Theorem \ref{main} is not needed. To see this we identify $P_1 (H)$ with the set of all $2\times2$ hermitian idempotent matrices. Then every element of $P_1(H)$ is a matrix of the form
$$
\left[ \begin{matrix} {p & z\sqrt{p (1-p)} \cr \overline{z}\sqrt{p (1-p)} & 1-p \cr} \end{matrix} \right]
$$
for some real $p$, $0 \le p \le 1$, and some complex number $z$ of modulus one. It is straightforward to verify that
$$
I - P = UP^t U^\ast , \ \ \ P \in P_1 (H),
$$
where
$$
U = \left[ \begin{matrix} {0 & 1 \cr -1 & 0 \cr} \end{matrix} \right] .
$$
Note that $P^t = \overline {P}$, $P \in P_1 (H)$, where $\overline {P}$ is the matrix obtained from $P$ by applying the complex-conjugation entrywise.

It is easy to check that when $\dim H = 2n$ and $n > 1$, the map $P \mapsto I-P$ differs from every map $P \mapsto UPU^\ast$, where $U : H \to H$ is any unitary or antiunitary operator. Indeed, assume that $I-P = UPU^\ast$, $P \in P_n (H)$, for some unitary or antiunitary operator $U$. 
We will identify linear operators on $H$ with matrices. Let $P_1 , P_2 , P_3$ be diagonal projections of rank $n$, that is, $(2n) \times (2n)$ diagonal matrices with $n$ diagonal entries equal to $1$ and the other $n$ diagoanl entries zero. We choose $P_1 , P_2 , P_3$ in such a way that the first diagonal entry of each of them is equal to $1$ and for each $j$, $2 \le j \le 2n$, the $j$-th diagonal entry of at least one of these three matrices is nonzero. Set $A = (1/3) (P_1 + P_2 + P_3)$. Then the first diagonal entry of $A$ equals one and all other diagonal entries are nonzero, meaning that $1$ is an eigenvalue of $A$ and $A$ is invertible.
We have
$$
I - A = I - {1 \over 3} (P_1 + P_2 + P_3)  =  {1 \over 3} ((I -P_1) + (I- P_2) + (I - P_3))  $$ $$=  {1 \over 3} (UP_1 U^\ast + UP_2 U^\ast + UP_3 U^\ast)
= U  \, \left( {1 \over 3} (P_1 + P_2 + P_3)\right) \, U^\ast = UAU^\ast.
$$
Because $A$ is invertible the operator $UAU^\ast$ is invertible, while the fact that $1$ is an eigenvalue of $A$ yields that $I - A$ is singular. This contradicts the above equation.

Nothing can be said about maps  $\phi : P_n (H) \to P_n (H)$ satisfying
${\rm ma}\, (\phi (P ), \phi (Q)) = {\rm ma}\, (P,Q)$, $P,Q \in P_n (H)$, when $n+1 \le \dim H < 2n$.  Namely, in this case the intersection of any two $n$-dimensional subspaces is a non-zero
subspace, and thus, $ {\rm ma}\, (P,Q) = 0$ for every pair $P,Q \in P_n (H)$. It follows that in this case each map $\phi : P_n (H) \to P_n (H)$ satisfies the condition 
${\rm ma}\, (\phi (P ), \phi (Q)) = {\rm ma}\, (P,Q)$, $P,Q \in P_n (H)$.

\section{Preliminary results}

We start this section with an easy observation. Recall that two projections $P$ and $Q$ are orthogonal, $P \perp Q$, if and only if $PQ=0$ which is equivalent to $QP=0$. 
This is further equivalent to ${\rm Im}\, P \perp {\rm Im}\, Q$. If $P$ and $Q$ are projections of the same finite rank, then we have $P\perp Q$ if and only if ${\rm ma}\, (P,Q) = {\pi \over 2}$.
Of course, if $P,Q \in P_\infty (H)$, then $P \perp Q$ if and only if
$$
\inf   \{ \angle (x,y) \, : \, x \in {\rm Im}\, P , \, y \in {\rm Im}\, Q , \, \| x \| = \| y \| = 1 \} = {\pi \over 2} .
$$

Let $P,Q$ be any non-zero projections (not necessarily of the same rank and not necessarily of finite rank) on $H$. We will extend Definition \ref{zak} to any such pair of projections by
$$
{\rm ma}\, (P,Q) = \inf  \{ \angle (x,y) \, : \, x \in {\rm Im}\, P , \, y \in {\rm Im}\, Q , \, \| x \| = \| y \| = 1 \} \in \left[ 0, {\pi \over 2} \right].
$$

\begin{lemma}\label{padejmo}
Let $H$ be a Hilbert space and $P_1, P_2, Q_1 , Q_2$ non-zero projections on $H$. Let further $P$ and $Q$ be projections on the orthogonal direct sum $H \oplus H$ with corresponding matrix
representations
$$
P = \left[ \begin{matrix}  { P_1 & 0 \cr 0 & P_2 \cr } \end{matrix} \right] \ \ \ {\rm and} \ \ \ Q = \left[ \begin{matrix}  { Q_1 & 0 \cr 0 & Q_2 \cr } \end{matrix} \right].
$$
Then
$$
{\rm ma}\, (P,Q) = \min \{ {\rm ma}\, (P_1 ,Q_1 ) , {\rm ma}\, (P_2 ,Q_2 ) \}.
$$
\end{lemma}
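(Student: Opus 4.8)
The plan is to translate the statement from angles into inner products and then reduce to a short elementary estimate. Since $\arccos : [-1,1] \to [0,\pi]$ is strictly decreasing, for any pair of non-zero projections $R,S$ on a Hilbert space I set
$$
c(R,S) = \sup \{ |\langle x, y \rangle| \, : \, x \in {\rm Im}\, R, \ y \in {\rm Im}\, S, \ \| x \| = \| y \| = 1 \} \in [0,1] ,
$$
so that ${\rm ma}\,(R,S) = \arccos c(R,S)$. Then the asserted identity ${\rm ma}\,(P,Q) = \min\{ {\rm ma}\,(P_1,Q_1), {\rm ma}\,(P_2,Q_2)\}$ is equivalent to
$$
c(P,Q) = \max \{ c(P_1,Q_1), c(P_2,Q_2) \} ,
$$
and it is this identity that I would establish. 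Throughout I use that, inside the orthogonal direct sum $H \oplus H$, the subspace ${\rm Im}\, P$ consists precisely of the pairs $(a,b)$ with $a \in {\rm Im}\, P_1$ and $b \in {\rm Im}\, P_2$, and similarly for ${\rm Im}\, Q$. Note also that, since the $P_i$ and $Q_i$ may be of infinite rank, it is essential to work with the supremum (equivalently, the infimum of angles) rather than a maximum, as the principal vectors need not exist.

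For the inequality $c(P,Q) \ge \max\{c(P_1,Q_1), c(P_2,Q_2)\}$ I would only observe that vectors of the form $(x_1, 0)$ with $x_1 \in {\rm Im}\, P_1$, $\|x_1\|=1$, and $(y_1, 0)$ with $y_1 \in {\rm Im}\, Q_1$, $\|y_1\|=1$, are admissible in the supremum defining $c(P,Q)$, and $\langle (x_1,0), (y_1,0)\rangle = \langle x_1, y_1\rangle$; passing to the supremum gives $c(P,Q) \ge c(P_1,Q_1)$, and the same argument with the second coordinate gives $c(P,Q) \ge c(P_2,Q_2)$.

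The real content is the reverse inequality. Fix unit vectors $x = (x_1,x_2) \in {\rm Im}\, P$ and $y = (y_1,y_2) \in {\rm Im}\, Q$, so that $x_i \in {\rm Im}\, P_i$, $y_i \in {\rm Im}\, Q_i$, and $\| x_1 \|^2 + \| x_2 \|^2 = \| y_1 \|^2 + \| y_2 \|^2 = 1$. From $\langle x, y\rangle = \langle x_1,y_1\rangle + \langle x_2,y_2\rangle$ we get $|\langle x,y\rangle| \le |\langle x_1,y_1\rangle| + |\langle x_2,y_2\rangle|$. For each $i$, normalizing the non-zero components shows $|\langle x_i, y_i\rangle| \le \| x_i\|\, \| y_i\|\, c(P_i,Q_i)$, and this bound is trivially valid when $x_i = 0$ or $y_i = 0$. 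Hence
$$
|\langle x,y\rangle| \le \max\{ c(P_1,Q_1), c(P_2,Q_2)\} \, \bigl( \| x_1\|\, \| y_1\| + \| x_2\|\, \| y_2\| \bigr) \le \max\{ c(P_1,Q_1), c(P_2,Q_2)\} ,
$$
where the last step is the Cauchy--Schwarz inequality applied to the vectors $(\| x_1\|, \| x_2\|)$ and $(\| y_1\|, \| y_2\|)$, both of which have norm one. Taking the supremum over all such $x$ and $y$ yields $c(P,Q) \le \max\{ c(P_1,Q_1), c(P_2,Q_2)\}$, which completes the argument.

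I do not expect a genuine obstacle; the proof is short. The only points requiring a little care are the bookkeeping when a component $x_i$ or $y_i$ vanishes (where the normalization is not literally available, but the needed inequality is trivial), the identification of ${\rm Im}\, P$ with the set of pairs lying in ${\rm Im}\, P_1$ and ${\rm Im}\, P_2$, and the passage back and forth between the supremum of $|\langle \cdot, \cdot\rangle|$ and the infimum of angles through the monotonicity of $\arccos$.
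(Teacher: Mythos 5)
Your proof is correct, and it takes a genuinely different route from the paper. The paper proves the inequality $\mathrm{ma}\,(P,Q)\le\min\{\mathrm{ma}\,(P_1,Q_1),\mathrm{ma}\,(P_2,Q_2)\}$ by a monotonicity argument (the same one you phrase as the easy direction for $c$), and then, for the reverse inequality, picks near-optimal unit vectors in $\mathrm{Im}\,P$ and $\mathrm{Im}\,Q$, perturbs them so that all four components are non-zero, passes to the rank-one projections onto those components, and computes the minimal angle for the resulting block-diagonal rank-two projections via the fact that the sines of the non-zero principal angles are the non-zero singular values of the difference of the projections (the union of the singular values of the two blocks); an $\varepsilon$-argument then finishes the proof. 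You instead work directly with the cosine quantity $c(R,S)=\sup|\langle x,y\rangle|$, note that $\mathrm{ma}\,(R,S)=\arccos c(R,S)$ by monotonicity and continuity of $\arccos$, and get the hard inequality from the pointwise estimate $|\langle x,y\rangle|\le |\langle x_1,y_1\rangle|+|\langle x_2,y_2\rangle|\le \max\{c(P_1,Q_1),c(P_2,Q_2)\}\bigl(\|x_1\|\,\|y_1\|+\|x_2\|\,\|y_2\|\bigr)\le\max\{c(P_1,Q_1),c(P_2,Q_2)\}$, the last step by Cauchy--Schwarz in $\mathbb{R}^2$. What your approach buys is economy and robustness: it avoids the singular-value characterization of principal angles, the reduction to rank-one blocks, and the perturbation/$\varepsilon$ bookkeeping, and it handles degenerate situations (a vanishing component $x_i$ or $y_i$, or $M_i=N_i$, where ``non-zero singular values'' would need extra care) with no special treatment. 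The paper's argument, on the other hand, stays within the principal-angle/singular-value formalism that it uses elsewhere, at the cost of a longer and slightly more delicate proof.
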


\noindent
{\sl Proof.} 
Let $K$ be any Hilbert space. We define a partial order on the set of all projections on $K$ by
$R \le S$ if and only if ${\rm Im}\, R \subset {\rm Im}\, S$, $R,S \in P(K)$. Of course, this condition is equivalent to any, and hence both, of the equalities $RS = R$ and $SR = R$.
It follows directly from the definition of ${\rm ma}\, ( \cdot , \cdot)$ that if $0\not= R_1 \le S_1$ and 
$0\not= R_2 \le S_2$ for some $R_1 , R_2 , S_1 , S_2 \in P(K)$, then
$$
{\rm ma}\, (R_1 , R_2 ) \ge {\rm ma}\, (S_1 , S_2 ).
$$

Now,
$$
{\rm ma}\, (P_1,Q_1) = {\rm ma}\, \left( 
 \left[ \begin{matrix}  { P_1 & 0 \cr 0 & 0 \cr } \end{matrix} \right] ,  \left[ \begin{matrix}  { Q_1 & 0 \cr 0 & 0 \cr } \end{matrix} \right] \right) ,
$$
and since 
$$
 \left[ \begin{matrix}  { P_1 & 0 \cr 0 & 0 \cr } \end{matrix} \right] \le P \ \, {\rm and} \ \, 
 \left[ \begin{matrix}  { Q_1 & 0 \cr 0 & 0 \cr } \end{matrix} \right] \le Q
$$
we have
$$
{\rm ma}\, (P_1,Q_1) \ge {\rm ma}\, (P,Q) .
$$
Of course, the same is true if we replace $P_1$ and $Q_1$ by $P_2$ and $Q_2$, respectively, and therefore,
$$
{\rm ma}\, (P,Q) \le \min \{ {\rm ma}\, (P_1 ,Q_1 ) , {\rm ma}\, (P_2 ,Q_2 ) \}.
$$

To prove the reverse inequality we choose an arbitrary positive real number $\varepsilon$. Then there exist unit vectors
$$
 \left[ \begin{matrix}  { x_0 \cr y_0 \cr } \end{matrix} \right] \in {\rm Im}\, P \ \, {\rm and} \ \, 
 \left[ \begin{matrix}  { u_0 \cr v_0 \cr } \end{matrix} \right] \in {\rm Im}\, Q, 
$$
that is, $x_0 \in {\rm Im}\, P_1$, $y_0 \in {\rm Im}\, P_2$, 
$u_0 \in {\rm Im}\, Q_1$, and $v_0 \in {\rm Im}\, Q_2$,  such that
$$
\angle \left(  \left[ \begin{matrix}  { x_0 \cr y_0 \cr } \end{matrix} \right] , 
 \left[ \begin{matrix}  { u_0 \cr v_0 \cr } \end{matrix} \right] \right) < {\rm ma}\, (P,Q) + \varepsilon.
$$
After taking a small enough perturbation  of the above vectors we can assume that we have $x_0 \not= 0$, $y_0 \not=0$, $u_0 \not=0$, and $v_0 \not=0$
in the above inequality.
We further know that
$$
\angle \left(  \left[ \begin{matrix}  { x_0 \cr y_0 \cr } \end{matrix} \right] , 
 \left[ \begin{matrix}  { u_0 \cr v_0 \cr } \end{matrix} \right] \right) \ge {\rm ma}\, \left( \left[ \begin{matrix}  { M_1 & 0 \cr 0 & M_2 \cr } \end{matrix} \right]\  ,
 \left[ \begin{matrix}  { N_1 & 0 \cr 0 & N_2 \cr } \end{matrix} \right] \right),
$$
where $M_1 , M_2, N_1, N_2$ are the rank one projections on $H$ statisfying $M_1 x_0 = x_0$, $M_2 y_0 = y_0$, $N_1 u_0 = u_0$, and $N_2 v_0 = v_0$. In particular,
$M_1 \le P_1$, $M_2 \le P_2$, $N_1 \le Q_1$, and $N_2 \le Q_2$.

Now, ${\rm ma}\, (M,N)$, where
$$
M=  \left[ \begin{matrix}  { M_1 & 0 \cr 0 & M_2 \cr } \end{matrix} \right] \ \ \ {\rm and} 
\ \ \
N=  \left[ \begin{matrix}  { N_1 & 0 \cr 0 & N_2 \cr } \end{matrix} \right],
$$
is  the first principal angle between ${\rm Im}\, M$ and ${\rm Im}\, N$ and using the facts that the sine of the minimal non-zero principal angle
is the smallest non-zero singular value of $M-N$ and that the set of non-zero singular values of $M-N$ is the union of the set of
 non-zero singular values of $M_1 -N_1$ and  the set of
 non-zero singular values of $M_2 -N_2$, we see that 
$$
{\rm ma}\, (P,Q) + \varepsilon \ge
{\rm ma}\, (M,N) = \min \{ {\rm ma}\, (M_1 , N_1) , {\rm ma}\, (M_2 , N_2) \} $$ $$ \ge
 \min \{ {\rm ma}\, (P_1 , Q_1) , {\rm ma}\, (P_2 , Q_2) \}. 
$$
Since $\varepsilon$ was an arbitrary positive real number this completes the proof.
\enp

In the rest of this section we will assume that $H$ is an infinite-dimensional Hilbert space and $P_n (H)$, $n \ge 2$, will be considered as the set of all $n$-dimensional subspaces of $H$. For any vectors $x_1, \ldots , x_k \in H$ we will denote by $[x_1 , \ldots , x_k]$ the linear span of $x_1, \ldots , x_k$. We will say that two subspaces $U,V \in P_n (H)$ are 1-orthogonal, $ U \sharp V$, if there exists an orthonormal system of vectors $e_1, \ldots , e_{2n-1} \in H$ such that
$$
U = [e_1, \ldots , e_n] \ \ \ {\rm and} \ \ \ V = [e_n , \ldots , e_{2n-1}].
$$
In other words, we have
 $U \sharp V$ if and only if $ \dim (U \cap V) = 1$ and the orthogonal complement of $U \cap V$ in $U$ is an $(n-1)$-dimensional subspace of $H$ that is orthogonal to the orthogonal complement of $U \cap V$ in $V$.

\begin{lemma}\label{simulant}
Let $\phi : P_n (H) \to P_n (H)$ be a map such that for every pair $U,V \in P_n (H)$ we have
$$
U \perp V \iff \phi (U) \perp \phi (V)
$$
\begin{equation}\label{jotka}
U \cap V = \{ 0 \} \iff \phi (U) \cap \phi (V)  = \{ 0 \}.
\end{equation}
Then
\begin{itemize}
\item for every pair $U,V \in P_n (H)$ we have $U \sharp V \Rightarrow \phi (U) \sharp \phi (V)$, and
\item for every $U,V,W$ such that
\begin{equation}\label{uno}
U \sharp V \ \ \ {\rm and} \ \ \ U \sharp W \ \ \ {\rm and} \ \ \ V \sharp W
\end{equation}
and
\begin{equation}\label{duo}
U \cap V = U \cap W = V \cap W
\end{equation}
we have
$$
\phi (U) \sharp \phi (V) \ \ \ {\rm and} \ \ \ \phi (U ) \sharp \phi (W ) \ \ \ {\rm and} \ \ \ \phi (V ) \sharp \phi (W )
$$
and
$$
\phi (U ) \cap \phi (V ) = \phi (U ) \cap \phi (W)  = \phi (V ) \cap \phi (W).
$$
\end{itemize}
\end{lemma}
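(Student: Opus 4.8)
\noindent\textit{Proof strategy.} The plan is to express both the relation $\sharp$ and the configuration in the second item purely through the two relations $\phi$ is assumed to respect — orthogonality and non-triviality of intersection — and then transport that description through $\phi$. The elementary criterion I would establish first is: for $U,V\in P_n(H)$ we have $U\sharp V$ if and only if $U\cap V\neq\{0\}$ and there exist pairwise orthogonal $S_1,\dots,S_{n-1}\in P_n(H)$ with $S_i\perp V$ and $S_i\cap U\neq\{0\}$ for each $i$. For the ``if'' direction, choose $0\neq x_i\in S_i\cap U$; since $S_i\perp S_j$ the vectors $x_1,\dots,x_{n-1}$ are mutually orthogonal, nonzero, and lie in $U\cap V^{\perp}$, so $\dim(U\cap V^{\perp})\geq n-1$; as $U\cap V$ and $U\cap V^{\perp}$ are orthogonal subspaces of the $n$-dimensional space $U$, this together with $U\cap V\neq\{0\}$ forces $\dim(U\cap V)=1$, $\dim(U\cap V^{\perp})=n-1$ and $U=(U\cap V)\oplus(U\cap V^{\perp})$ — exactly $U\sharp V$. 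For the ``only if'' direction, take an orthonormal basis $e_1,\dots,e_{n-1}$ of $U\ominus(U\cap V)$, which by the definition of $\sharp$ is orthogonal to $V$, pick (using $\dim H=\infty$) an orthonormal family $g_{i,k}$ ($1\le i,k\le n-1$) orthogonal to $U+V$, and set $S_i=[e_i,g_{i,1},\dots,g_{i,n-1}]$.

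Granting this, the first item is immediate. If $U\sharp V$, take the $S_i$ just described; since $\phi$ preserves orthogonality and non-triviality of intersection in both directions, $\phi(S_1),\dots,\phi(S_{n-1})$ are pairwise orthogonal, one has $\phi(S_i)\perp\phi(V)$ and $\phi(S_i)\cap\phi(U)\neq\{0\}$ for each $i$, and $\phi(U)\cap\phi(V)\neq\{0\}$ since $U\cap V\neq\{0\}$. The ``if'' part of the criterion, applied to $\phi(U),\phi(V)$, gives $\phi(U)\sharp\phi(V)$.

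For the second item, the three relations $\phi(U)\sharp\phi(V)$, $\phi(U)\sharp\phi(W)$, $\phi(V)\sharp\phi(W)$ are handed to us by the first item, so the only thing left is the equality of the three (now one-dimensional) intersections. Here I would use the hypothesis $U\cap V=U\cap W=V\cap W=:\ell$. From $U\sharp V$ and $U\sharp W$ the space $A:=U\ominus\ell$ is $(n-1)$-dimensional and orthogonal to \emph{both} $V$ and $W$, so one can repeat the construction above to get pairwise orthogonal $S_1,\dots,S_{n-1}\in P_n(H)$, each spanned by one orthonormal vector of $A$ and fresh vectors orthogonal to $U+V+W$, with $S_i\perp V$, $S_i\perp W$, $S_i\cap U\neq\{0\}$. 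Applying $\phi$ and picking $0\neq x_i\in\phi(S_i)\cap\phi(U)$ yields $n-1$ mutually orthogonal nonzero vectors in $\phi(U)\cap\phi(V)^{\perp}\cap\phi(W)^{\perp}$. But $\phi(U)\sharp\phi(V)$ means $\phi(U)\cap\phi(V)^{\perp}=\phi(U)\ominus(\phi(U)\cap\phi(V))$ is $(n-1)$-dimensional, and likewise $\phi(U)\cap\phi(W)^{\perp}=\phi(U)\ominus(\phi(U)\cap\phi(W))$; each of these contains the $(n-1)$-dimensional span of the $x_i$, hence both equal it, so $\phi(U)\cap\phi(V)=\phi(U)\cap\phi(W)$. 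Permuting $U,V,W$ gives equality of all three.

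The only genuinely non-routine step is spotting the right reformulation: rather than trying to detect ``$\dim(U\cap V)=1$'' directly, which is awkward with only $\perp$ and triviality of intersection at hand, one detects the complementary inequality $\dim(U\cap V^{\perp})\geq n-1$ by exhibiting $n-1$ mutually orthogonal $n$-dimensional witnesses — a feature visibly transported by any map preserving these two relations. The rest is bookkeeping: checking that the constructed subspaces really are pairwise orthogonal and orthogonal to the relevant $V$ (and $W$), and the final dimension count; infinite-dimensionality of $H$ is used only to supply enough fresh orthonormal vectors.
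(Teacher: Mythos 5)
Your proof is correct. You verify the relevant relations by exhibiting witnesses that $\phi$ visibly transports, which is the same general spirit as the paper, but your route is genuinely different in its organization and key lemma. The paper proves the second item directly: it writes $U,V,W$ as spans of an orthonormal system $e,f_j,f'_j,f''_j$, embeds these $1+3(n-1)$ vectors into pairwise orthogonal members $L,M_j,M'_j,M''_j$ of $P_n(H)$, and then uses the preserved relations to reconstruct orthogonal bases of $\phi(U),\phi(V),\phi(W)$ whose ``$\phi(L)$-components'' $u,u',u''$ are shown (by a short linear-algebra computation with a nonzero $w\in\phi(U)\cap\phi(V)$) to be proportional; both the $\sharp$ relations and the equality of intersections drop out at once, and the first item is then noted as a consequence of the second. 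You instead isolate an intrinsic characterization of $\sharp$ purely in terms of the two preserved relations (existence of $n-1$ pairwise orthogonal $S_i\in P_n(H)$ with $S_i\perp V$, $S_i\cap U\neq\{0\}$, together with $U\cap V\neq\{0\}$), which makes the first item automatic, and then handle the intersection equality by a dimension count identifying $\phi(U)\ominus\bigl(\phi(U)\cap\phi(V)\bigr)=\phi(U)\cap\phi(V)^{\perp}$ and $\phi(U)\cap\phi(W)^{\perp}$ with the span of the transported witnesses. Your approach buys a cleaner ``definability'' statement ($\sharp$ is expressible via $\perp$ and triviality of intersection, hence preserved by any map respecting those), and avoids reconstructing full bases of the image subspaces; the paper's single construction is more hands-on and delivers aligned orthonormal bases of $\phi(U),\phi(V),\phi(W)$, proving both bullet points in one stroke. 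Both arguments use infinite-dimensionality only to supply enough fresh orthonormal vectors, and both are complete.
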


\begin{proof}
It is enough to prove the second claim because the first one is a straightforward consequence. So, let us assume that $U,V,W \in P_n (H)$ are subspaces satisfying (\ref{uno}) and (\ref{duo}). Let $e \in H$ be a unit vector such that
$$
U \cap V = U \cap W = V \cap W = [e]
$$
and denote by $U_1 , V_1 , W_1$ the orthogonal complements of $[e]$ in $U,V,W$, respectively. It follows from (\ref{uno}) that
$$
U_1 \perp V_1 \ \ \ {\rm and} \ \ \ U_1 \perp W_1 \ \ \ {\rm and} \ \ \ V_1 \perp W_1 .
$$
Consequently, we have
$$
U = [e, f_1, \ldots , f_{n-1}],
$$
$$
V = [e, f'_1, \ldots , f'_{n-1}],
$$
and
$$
W = [e, f''_1, \ldots , f''_{n-1}]
$$
for some orthonormal set of vectors $e,  f_1, \ldots , f_{n-1},  f'_1, \ldots , f'_{n-1},  f''_1, \ldots , f''_{n-1}$.

Since $H$ is infinite-dimensional we can find pairwise orthogonal subspaces $L, M_1 ,\ldots, M_{n-1}, M'_1 ,\ldots, M'_{n-1}, M''_1 ,\ldots, M''_{n-1} \in P_n (H)$ such that 
$$
e \in L \ \ \ {\rm and} \ \ \ f_j \in M_j \ \ \ {\rm and}   \ \ \ f'_j \in M'_j \ \ \ {\rm and}  \ \ \ f''_j \in M''_j , \ \ \ j= 1, \ldots, n-1. 
$$
Then the subspaces 
\begin{equation}\label{furi}
\phi (L), \phi (M_1 ),\ldots, \phi(M_{n-1}), \phi(M'_1 ),\ldots, \phi( M'_{n-1}), \phi(M''_1) ,\ldots, \phi( M''_{n-1})
\end{equation} 
are pairwise orthogonal, too.

Using (\ref{jotka}) we see that we can find unit vectors
$$
u \in \phi (U) \cap \phi (L),  v_1 \in \phi (U) \cap \phi (M_1), \ldots, v_{n-1} \in \phi (U) \cap \phi (M_{n-1})
$$
and because the vectors $u, v_1 , \ldots, v_{n-1}$ are pairwise orthogonal and $\phi (U)$ is of dimension $n$, we have
$$
\phi (U) = [u, v_1 , \ldots , v_{n-1}].
$$

In the same way we see that there are unit vectors 
$$
u' \in \phi (V) \cap \phi (L),  v'_1 \in \phi (V) \cap \phi (M'_1), \ldots, v'_{n-1} \in \phi (V) \cap \phi (M'_{n-1})
$$
and
$$
u'' \in \phi (W) \cap \phi (L),  v''_1 \in \phi (W) \cap \phi (M''_1), \ldots, v''_{n-1} \in \phi (W) \cap \phi (M''_{n-1})
$$ 
such that
$$
\phi (V) = [u', v'_1 , \ldots , v'_{n-1}]
$$
and
$$
\phi (W) = [u'', v''_1 , \ldots , v''_{n-1}].
$$
Because the subspaces (\ref{furi}) are pairwise orthogonal the vectors
$$
v_1 , \ldots , v_{n-1}, v'_1 , \ldots , v'_{n-1}, v''_1 , \ldots , v''_{n-1}
$$
are pairwise orthogonal and each of the vectors $u,u',u''$ is orthogonal to each of the above vectors.

In order to complete the proof we only need to show that there exist complex numbers $z',z''$ of modulus one such that $u=z' u'$ and $u=z''u''$.
We know that $\phi (U) \cap \phi (V)$ is a nontrivial subspace. Choose a nonzero vector $w \in \phi (U) \cap \phi (V)$. Then
\begin{equation}\label{boanebo}
w = \lambda u + \sum_{j=1}^{n-1} \mu_j v_j = \lambda' u' + \sum_{j=1}^{n-1} \mu'_j v'_j 
\end{equation}
and therefore
$$
\lambda u - \lambda' u' \in [v_1 , \ldots , v_{n-1}, v'_1 , \ldots , v'_{n-1}] .
$$
The left-hand side vector is orthogonal to the right-hand side vector space. Thus $\lambda u = \lambda' u'$. If $\lambda=0$ then $\lambda' = 0$ and then (\ref{boanebo}) would imply $w=0$, a contradiction. Thus, both $\lambda$'s are nonzero which means that 
$u=z' u'$ for some complex number $z'$ of modulus one. In the same way we see that
 $u=z''u''$ for some complex number $z''$ of modulus one.

\end{proof}

\section{Proofs}

This section is devoted to the proofs of our main results. The first one is trivial.

\bigskip

\noindent
{\sl Proof of Proposition \ref{lako}}. Clearly, $\phi$ preserves orthogonality, that is, for every pair $P,Q \in P_\infty (H)$ we have
$$
P\perp Q \iff \phi (P) \perp \phi (Q) .
$$
We claim that $\phi$ is injective. Indeed, if $P, Q \in P_\infty (H)$
are distinct, then there exists $R \in P_\infty (H)$ which is orthogonal to one of them but not orthogonal to the other one, say $R \perp P$ and $R\not\perp Q$.
Then $\phi( R) \perp \phi (P)$ and $\phi( R)\not\perp \phi (Q)$. It follows that $\phi(P) \not= \phi(Q)$,
and consequently, $\phi$ is injective. Hence, it is bijective. The statement is now a direct consequence of \cite[Theorem 1.2]{Sem}.
\enp

\bigskip
\noindent
{\sl Proof of Proposition \ref{manje}}. Since $H$ is infinite-dimensional we can identify $H$ with the orthogonal direct sum of two copies of $H$. Using this identification we can identify maps from $P_\infty (H)$ to
itself with maps from $P_\infty (H)$ to $P_\infty ( H \oplus H)$. 

Let $\varphi : P_\infty (H) \to P (H)$ be any map such that $0 \not= \varphi (P) \le P$ for every $P \in P_\infty (H)$.  Clearly, for every pair of projections $P,Q \in P_\infty (H)$
we have
$$
{\rm ma}\, (\varphi (P) , \varphi (Q) ) \ge {\rm ma} \, (P,Q).
$$
We define $\phi : P_\infty (H) \to P_\infty ( H \oplus H)$ by
$$
\phi (P) = \left[ \begin{matrix}  { P & 0 \cr 0 & \varphi (P) \cr } \end{matrix} \right] , \ \ \ P\in P_\infty (H). 
$$
Using Lemma \ref{padejmo} we verify that $\phi$ satisfies ${\rm ma}\, (\phi (P) , \phi (Q) ) = {\rm ma} \, (P,Q)$ for all $P,Q \in P_\infty (H)$. Since the only assumption on the map $\varphi$ is that
$0 \not= \varphi (P) \le P$ for every $P \in P_\infty (H)$ one can easily choose $\varphi$ in such a way that $\phi$ is not standard (for example, the fact that $\phi$ is standard implies $\varphi (P) \le \varphi (Q)$
for any pair of projections $P,Q \in P_\infty (H)$ satisfying $P \le Q$, and one can choose $\varphi$ in such a way that this condition is not fulfilled).
\enp

In order to prove Theorem \ref{main} we will distinguish four cases. 
The special case when $n=1$ together with the remark following the formulation of our main result is a non-bijective version of Wigner's theorem. For a very short proof we refer to \cite{Ge0}.

We will now formulate three results on maps preserving the minimal principal angle. Our main theorem is a straightforward consequence. Actually, the last two results below are much stronger than the corresponding versions of Theorem \ref{main}. One of the results below has been known before \cite{Pan} and the proofs of the other two rely heavily on some results of Blunck and Havlicek \cite{BlH}, and Pankov \cite{Pan}.

\begin{theorem}\label{main1}
Let $n$ be a positive integer, $n > 1$, and
$H$  a Hilbert space with $\dim H = 2n$. Assume that $\phi : P_n (H) \to P_n (H)$ is a map satisfying
$$
{\rm ma}\, (\phi (P ), \phi (Q)) = {\rm ma}\, (P,Q)
$$ 
for all $P,Q \in P_n (H)$. Then there exists a unitary or antiunitary operator $U : H \to H$ such that
$$
\phi (P) = UPU^\ast
$$
for every $P \in P_n (H)$, or
$$
\phi (P) = U(I-P)U^\ast
$$
for every $P \in P_n (H)$.
\end{theorem}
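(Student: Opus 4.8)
The plan is to show that, in this finite-dimensional setting, a map preserving the minimal angle is automatically a surjective isometry for the gap metric, and then to quote the classification of such isometries from \cite{GeS}. To begin, recall from the start of Section~2 that for $P,Q\in P_n(H)$ one has ${\rm ma}\,(P,Q)=\pi/2$ precisely when $P\perp Q$. Hence the hypothesis forces $\phi$ to preserve orthogonality in both directions, and since $\dim H=2n$ the only element of $P_n(H)$ orthogonal to a given $P$ is $I-P$; therefore $\phi(I-P)=I-\phi(P)$ for every $P\in P_n(H)$.

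Next I would use an elementary fact about principal angles in a $2n$-dimensional space: the principal angles of the pair $(P,I-Q)$ are exactly the numbers $\pi/2-\theta$, where $\theta$ ranges over the principal angles of $(P,Q)$; this is read off from the canonical form of a pair of $n$-dimensional subspaces (cf. \cite{Gal}). In particular, the smallest principal angle of $(P,I-Q)$ --- which is ${\rm ma}\,(P,I-Q)$ by Definition~\ref{zak} --- equals $\pi/2$ minus the largest principal angle of $(P,Q)$, and since the sine of the latter is $\|P-Q\|$ we obtain $\|P-Q\|=\cos\,{\rm ma}\,(P,I-Q)$ for all $P,Q\in P_n(H)$. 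Combining this with $\phi(I-Q)=I-\phi(Q)$ and the hypothesis gives $\|\phi(P)-\phi(Q)\|=\cos\,{\rm ma}\,(\phi(P),\phi(I-Q))=\cos\,{\rm ma}\,(P,I-Q)=\|P-Q\|$ for all $P,Q$, so $\phi$ is an isometry of $P_n(H)$ with respect to the gap metric.

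Since $\dim H=2n<\infty$, the Grassmann space $P_n(H)$ is compact in the gap metric, and it is well known that a distance-preserving self-map of a compact metric space is surjective; hence $\phi$ is a surjective gap-metric isometry of $P_n(H)$. By \cite{GeS} (which treats, in particular, the case $\dim H=2n$), $\phi$ is then a standard map, or a standard map composed with the orthogonal complementation $P\mapsto I-P$; equivalently, there is a unitary or antiunitary operator $U$ on $H$ with $\phi(P)=UPU^\ast$ for every $P\in P_n(H)$, or with $\phi(P)=U(I-P)U^\ast$ for every $P\in P_n(H)$. This is exactly the conclusion of Theorem~\ref{main1}.

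I do not foresee a serious obstacle. The two points deserving care are the principal-angle identity $\|P-Q\|=\cos\,{\rm ma}\,(P,I-Q)$ --- which must be checked in the degenerate configurations as well, where it follows from the full decomposition of a subspace pair into its ``generic'' part together with the four intersection subspaces ${\rm Im}\,P\cap{\rm Im}\,Q$, ${\rm Im}\,P\cap{\rm Im}\,(I-Q)$, ${\rm Im}\,(I-P)\cap{\rm Im}\,Q$ and ${\rm Im}\,(I-P)\cap{\rm Im}\,(I-Q)$ --- and the observation that, thanks to finite-dimensionality, no surjectivity or injectivity of $\phi$ needs to be assumed at the outset, compactness supplying surjectivity so that \cite{GeS} applies. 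An alternative, uniform with the infinite-dimensional situation, is to note that ${\rm ma}\,(P,Q)=0$ if and only if ${\rm Im}\,P\cap{\rm Im}\,Q\neq\{0\}$, so that $\phi$ preserves complementarity of subspaces in both directions, and then to invoke the rigidity theory of Blunck and Havlicek \cite{BlH} and of Pankov \cite{Pan} for the middle Grassmannian, using $\phi(I-P)=I-\phi(P)$ afterwards to upgrade the resulting semilinear automorphism or correlation to a unitary or antiunitary operator; but in this finite-dimensional case the route above is shorter.
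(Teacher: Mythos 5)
Your argument is correct, but it follows a genuinely different route from the paper. You first extract orthocomplement-equivariance ($\phi(I-P)=I-\phi(P)$, since in a $2n$-dimensional space the only rank-$n$ projection orthogonal to $P$ is $I-P$), then use the canonical (CS) decomposition of a pair of $n$-dimensional subspaces to get the identity $\|P-Q\|=\cos\,{\rm ma}\,(P,I-Q)$ — which indeed survives the degenerate configurations, since the principal angles of $(P,I-Q)$ are exactly the complements of those of $(P,Q)$ also when angles $0$ and $\pi/2$ occur — and conclude that $\phi$ is a gap-metric isometry; surjectivity then comes from the standard fact that an isometric self-map of a compact metric space is onto, and you finish by quoting the classification of surjective isometries of $P_n(H)$ from \cite{GeS}, whose $\dim H=2n$ case gives precisely the two alternatives in the statement (finite-dimensionality making the implementing isometry unitary or antiunitary). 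The paper instead proves continuity of $\phi$ directly (by observing that ${\rm ma}\,(R_k,I-P)\to\pi/2$ as $R_k\to P$), gets injectivity from the angle-preserving hypothesis, obtains bijectivity via invariance of domain together with compactness and connectedness of $P_n(H)$, and then uses the fact that ${\rm ma}\,(P,Q)\neq 0$ characterizes complementarity to invoke the Blunck--Havlicek theorem \cite[Theorem 4.4]{BlH} on complementarity-preserving bijections, finally upgrading the resulting semilinear map to a unitary or antiunitary operator. The trade-off: your reduction is shorter and conceptually clean, but it leans on the full strength of the Geh\'er--\v Semrl isometry theorem, whose $\dim H=2n$ case is itself the delicate one; the paper's proof avoids that heavy input, resting only on the more elementary geometric result of Blunck and Havlicek, and in effect shows that far less than full angle preservation (essentially preservation of orthogonality and of complementarity, plus continuity) already suffices.
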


\begin{proof}
We will first show that $\phi$ is continuous. Let $(R_k) \subset P_n (H)$ be a sequence of projections of rank $n$ that converges to $P$ and denote by $Q = I-P$ the unique rank $n$ projection that is orthogonal to $P$. From
$$
\lim_{k \to \infty} R_k = P
$$
we deduce that ${\rm ma}\, (R_k , Q)$ tends to ${\pi \over 2}$. This is geometrically obvious. For a formal proof of this statement we notice that 
$$
\lim_{k \to \infty} (R_k -Q) = \lim_{k \to \infty} (R_k - (I-P)) = 2P-I.
$$
Because $2P-I$ is a unitary operator we see that the sequence of $(2n)$-tuples of singular values of $(R_k - Q)$ converges to the $(2n)$-tuple $(1,1,\ldots,1)$ and applying the fact the sines of non-zero principal angles between ${\rm Im}\, R_k$ and ${\rm Im}\, Q$
are exactly the non-zero singular values of the operator $R_k - Q$, each of them counted twice, we see that ${\rm ma}\, (R_k , Q)$ tends to ${\pi \over 2}$.
It follows that
$$
\lim_{n \to \infty} {\rm ma}\, (\phi (R_k ) ,\phi ( Q)) = {\pi \over 2}.
$$
Obviously, this yields that $\phi (R_k )$ converges to the unique projection of rank $n$ that is orthogonal to $\phi (Q)$. In other words,
$$
\lim_{n \to \infty} \phi (R_k) = \phi (P),
$$
yielding the continuity of $\phi$.

We further know that $\phi$ is injective.
Since $P_n (H)$ is a compact manifold and $\phi$ is continuous, $\phi (P_n (H))$ is also compact. 
On the other hand, the invariance of domain theorem ensures that $\phi (P_n (H)) \subset P_n (H)$ is open as well. 
Since $P_n (H)$ is connected, we conclude that $\phi$ is a bijective map from $P_n (H)$ onto itself. 

Let $P,Q \in P_n (H)$. Then ${\rm Im}\, P$ and ${\rm Im}\, Q$ are complemented subspaces of $H$, that is, $H = {\rm Im}\, P \oplus {\rm Im}\, Q$, if and only if ${\rm Im}\, P \cap {\rm Im}\, Q = \{ 0 \}$, and this is further equivalent to ${\rm ma}\, (P,Q) \not=0$. Thus, 
${\rm Im}\, P$ and ${\rm Im}\, Q$ are complemented if and only if ${\rm Im}\, \phi (P)$ and ${\rm Im}\, \phi (Q)$ are complemented. It follows from \cite[Theorem 4.4]{BlH} that
there exists a bijective semi-linear transformation $U\colon H\to H$ such that we have 
either
\begin{itemize}
\item ${\rm Im}\, \phi(P) = U ({\rm Im}\, P)$ for all $P\in P_n (H)$, or
\item${\rm Im}\, \phi(P) = U (({\rm Im}\, P)^\perp)$ for all $P\in P_n (H)$.
\end{itemize}
Since $\phi$ is continuous, $U$ must be continuous, and therefore, it is either linear or conjugate-linear map.

We start with the first possibility. Let $x,y \in H$ be orthogonal vectors. Then we can find orthogonal projections $P,Q = I-P \in P_n (H)$ such that $x$ belongs to
the image of $P$ while $y$ belongs to the image of $Q$. It follows from $\phi (P) \perp \phi (Q)$ that $Ux \perp Uy$. Assume now that $x,y \in H$ are orthogonal
and $\| x \| = \| y \|$. Then $x-y$ is orthogonal to $x+y$ and therefore $U(x-y) \perp U(x+y)$ implying that $\| U x \| = \| U y \|$. We conclude that for an arbitrary
pair of vectors $x,y \in H$ satisfying $\| x \| = \| y \|$ we have $\| U x \| = \| U y \|$. Indeed, take $z \in H$ with 
$\| x \| = \| y \| = \| z \|$ such that $z$ is orthogonal to both $x$ and $y$. Then  $\| U x \| = \| Uz \| = \| U y \|$, as desired.
It follows that $U$ is a scalar multiple of a unitary or an antiunitary operator. So, we may assume with no loss of generality that $U$ is either
a unitary or an antiunitary operator.

Thus, in the first case we have
$$
\phi (P) = UPU^\ast , \ \ \ P \in P_n (H),
$$
for some unitary or an antiunitary operator $U$. And in the second case we conclude in an almost the same way that there exists a unitary or an antiunitary operator $U$ on $H$
such that $\phi (P) = U(I-P)U^\ast$ for every $P \in P_n (H)$.
\end{proof}

The next case we will treat is that $2n < \dim H < \infty$. It has been known before that in order to get the desired conclusion in this special case we do not need to assume that all principal minimal angles are preserved. It is enough to assume that only the orthogonality is preserved. The result below is not new, see \cite[Theorem 2]{Pan}.

\begin{theorem}\label{main2}
Let $n$ be a positive integer, $n > 1$, and
$H$  a Hilbert space with $2n < \dim H < \infty$. Assume that $\phi : P_n (H) \to P_n (H)$ is a map such that for every pair $P,Q \in P_n (H)$ we have
$$
{\rm ma}\, (\phi (P ), \phi (Q)) =  {\pi \over 2} \iff  {\rm ma}\, (P,Q) =  {\pi \over 2}.
$$ 
Then there exists a unitary or antiunitary operator $U : H \to H$ such that
$$
\phi (P) = UPU^\ast
$$
for every $P \in P_n (H)$.
\end{theorem}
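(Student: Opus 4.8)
The plan is to reduce the statement to the known structure theorem for orthogonality preservers on finite-dimensional Grassmann spaces. The starting point is the elementary fact recorded at the beginning of Section 2: for two projections $P,Q \in P_n(H)$ of the same finite rank one has ${\rm ma}\,(P,Q) = \pi/2$ precisely when $P \perp Q$. Hence the hypothesis says exactly that $\phi$ preserves orthogonality in both directions, $P \perp Q \iff \phi(P) \perp \phi(Q)$ for all $P,Q \in P_n(H)$.

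The first step is to check that $\phi$ is injective, by the argument used in the proof of Proposition \ref{lako}. If $P \neq Q$ in $P_n(H)$, then, both being of rank $n$, the $(\dim H - n)$-dimensional subspaces $({\rm Im}\, P)^\perp$ and $({\rm Im}\, Q)^\perp$ are distinct, so neither contains the other; picking $w \in ({\rm Im}\, P)^\perp \setminus ({\rm Im}\, Q)^\perp$ and completing $[w]$ to an $n$-dimensional subspace of $({\rm Im}\, P)^\perp$ (possible since $\dim H - n > n$) produces $R \in P_n(H)$ with $R \perp P$ and $R \not\perp Q$. Applying the equivalence gives $\phi(R) \perp \phi(P)$ and $\phi(R) \not\perp \phi(Q)$, whence $\phi(P) \neq \phi(Q)$.

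Now $\phi$ is an injective self-map of $P_n(H)$ preserving orthogonality in both directions, and this is exactly the situation covered by \cite[Theorem 2]{Pan}, which I would simply invoke. For orientation, the mechanism is a fundamental-theorem-of-projective-geometry type argument: the orthogonality relation carries enough incidence information on $P_n(H)$ to force $\phi$ to be induced by a semilinear bijection $U \colon H \to H$, a priori either through ${\rm Im}\, \phi(P) = U({\rm Im}\, P)$ or through ${\rm Im}\, \phi(P) = U(({\rm Im}\, P)^\perp)$. In contrast with the case $\dim H = 2n$ of Theorem \ref{main1}, the second possibility is automatically excluded here: if $P \perp Q$ then $({\rm Im}\, P)^\perp$ and $({\rm Im}\, Q)^\perp$ both contain the nonzero subspace $({\rm Im}\, P + {\rm Im}\, Q)^\perp$ (nonzero because $\dim H > 2n$), so they are not orthogonal and $P \mapsto U(I-P)U^\ast$ does not preserve orthogonality. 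Finally, $U$ is upgraded to a unitary or antiunitary operator as in the last two paragraphs of the proof of Theorem \ref{main1}: since $U$ sends orthogonal subspaces to orthogonal subspaces, the associated Hermitian $\sigma$-sesquilinear form $(x,y) \mapsto \langle Ux, Uy\rangle$ has the same orthogonality relation as $\langle \cdot,\cdot\rangle$, which forces the field automorphism $\sigma$ to fix the positive reals and hence to be the identity or complex conjugation; then $U$ is linear or conjugate-linear and, being a bijection preserving orthogonality, a scalar multiple of a unitary or antiunitary operator, so after rescaling $\phi(P) = UPU^\ast$ for all $P \in P_n(H)$.

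The substance of the argument — all of it contained in the cited result of Pankov — is the rigidity of the orthogonality relation on $P_n(H)$. Unlike in Theorem \ref{main1}, we cannot first establish continuity of $\phi$, since the hypothesis only controls the single value $\pi/2$ of the minimal angle; the semilinear coordinatization must therefore be extracted purely combinatorially from how orthogonality of $n$-dimensional subspaces interacts with incidence, and this is exactly where the assumption $2n < \dim H < \infty$ is genuinely used. (For $\dim H = 2n$ the orthogonality relation alone is far too weak — it pairs each projection only with its unique complement $I-P$ — which is why Theorem \ref{main1} needs the full minimal angle and a continuity argument; for $\dim H < 2n$ no two rank-$n$ projections are orthogonal at all, as noted in the introduction.)
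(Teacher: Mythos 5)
Your proposal is correct and takes essentially the same route as the paper: the paper gives no independent argument for Theorem \ref{main2}, noting only that the hypothesis amounts to two-sided orthogonality preservation and that the statement is exactly \cite[Theorem 2]{Pan}. Your injectivity check and sketch of Pankov's mechanism are consistent additions, but the substance in both cases is the same citation.
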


In the remaining case when $\dim H = \infty$ we will assume the preservation of only two extremal principal minimal angles.

\begin{theorem}\label{main3}
Let $H$ be an infinite-dimensional Hilbert space, $n > 1$ an integer, and $\phi : P_n (H) \to P_n (H)$ a map such that for every pair $P,Q \in P_n (H)$ we have
$$
{\rm ma}\, (\phi (P ), \phi (Q)) =  {\pi \over 2} \iff  {\rm ma}\, (P,Q) =  {\pi \over 2}
$$ 
and
$$
{\rm ma}\, (\phi (P ), \phi (Q)) =  0 \iff  {\rm ma}\, (P,Q) =  0.
$$ 
Then there exists a linear or conjugate-linear isometry $U : H \to H$ such that
$$
\phi (P) = UPU^\ast
$$
for every $P \in P_n (H)$.
\end{theorem}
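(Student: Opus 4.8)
The plan is to strip the hypotheses down to their geometric content with Lemma~\ref{simulant}, hand that content to the known structure theorems for maps of Grassmann spaces in order to produce a semilinear map, and then use orthogonality to promote it to an isometry. First I would restate the two assumptions in lattice/geometric terms. For $P,Q\in P_n(H)$ one has ${\rm ma}\,(P,Q)=\pi/2$ exactly when $P\perp Q$, and, since ${\rm Im}\, P+{\rm Im}\, Q$ has dimension at most $2n$, the infimum defining ${\rm ma}\,(P,Q)$ is attained, so ${\rm ma}\,(P,Q)=0$ exactly when ${\rm Im}\, P\cap{\rm Im}\, Q\neq\{0\}$. Thus $\phi$ preserves, in both directions, orthogonality of projections and triviality of their intersection.

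Next I would check that $\phi$ is injective. If $P\neq Q$ then $({\rm Im}\, P)^\perp\neq({\rm Im}\, Q)^\perp$, so one of these subspaces fails to contain the other; after possibly interchanging $P$ and $Q$ there is a unit vector $z\in({\rm Im}\, Q)^\perp\setminus({\rm Im}\, P)^\perp$. Since $H$ is infinite-dimensional we may enlarge $[z]$ to an $n$-dimensional subspace $R\subseteq({\rm Im}\, Q)^\perp$; then $R\perp Q$ while $R\not\perp P$, hence $\phi(R)\perp\phi(Q)$ and $\phi(R)\not\perp\phi(P)$, so $\phi(P)\neq\phi(Q)$. Consequently $\phi$ satisfies the hypotheses of Lemma~\ref{simulant}, and therefore $\phi$ preserves the relation $\sharp$ and carries the triangle configurations (\ref{uno})--(\ref{duo}) to configurations of the same type.

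From this I would produce a semilinear injection $T\colon H\to H$ with ${\rm Im}\,\phi(P)=T({\rm Im}\, P)$ for every $P\in P_n(H)$. The mechanism is standard: $\sharp$-preservation identifies, for each $\sharp$-related pair $U,V$, the line ${\rm Im}\, U\cap{\rm Im}\, V$ with the line ${\rm Im}\,\phi(U)\cap{\rm Im}\,\phi(V)$; the triangle part of Lemma~\ref{simulant} shows this assignment depends only on the line (any two realizations of a given line can be joined by a chain of triangles, there being room for this because $H$ is infinite-dimensional), so one gets an induced injective self-map of the projective space $P_1(H)$ which sends the lines inside $U$ into the lines inside $\phi(U)$, and the fundamental theorem of projective geometry then furnishes $T$. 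Rather than carrying this out by hand I would invoke the structure theory for such maps of Grassmann spaces, that is, the results of Blunck--Havlicek~\cite{BlH} and Pankov~\cite{Pan}. Note that the ``dual'' alternative ${\rm Im}\,\phi(P)=T(({\rm Im}\, P)^\perp)$, which shows up when $\dim H=2n$, is impossible here, as $({\rm Im}\, P)^\perp$ is infinite-dimensional and so $T$ of it is not an element of $P_n(H)$. I expect this step to be the main obstacle: one must match the conclusion of Lemma~\ref{simulant} ($\sharp$- and triangle-preservation) to the precise hypotheses of these Chow-type theorems, and do so with no surjectivity assumption, so that it is the non-bijective (infinite-dimensional) form of those theorems that must be used.

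Finally, with $T$ in hand, I would show it preserves orthogonality of vectors: given $x\perp y$ in $H$, infinite-dimensionality lets us choose $P\perp Q$ in $P_n(H)$ with $x\in{\rm Im}\, P$ and $y\in{\rm Im}\, Q$, and then $\phi(P)\perp\phi(Q)$ forces $Tx\perp Ty$. Arguing exactly as in the proof of Theorem~\ref{main1} — apply this with $x-y$ and $x+y$ to see that $T$ preserves equality of norms among orthogonal pairs, hence (comparing with a third vector of equal norm orthogonal to both) among all pairs — one finds that the field automorphism attached to $T$ maps the unit circle into itself, so it is the identity or complex conjugation, and that $T=c\,U$ for a positive scalar $c$ and a linear or conjugate-linear isometry $U\colon H\to H$. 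Since rescaling does not change images, ${\rm Im}\,\phi(P)=U({\rm Im}\, P)$ for all $P$, i.e. $\phi(P)=UPU^\ast$, which is the desired conclusion.
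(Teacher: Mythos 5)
Your preliminary reductions are sound and match the paper: the two hypotheses do translate into two-sided preservation of orthogonality and of trivial intersection (the infimum is attained since the ranks are finite), your injectivity argument works, and Lemma~\ref{simulant} then applies; the induced map $\psi$ on $P_1(H)$, with well-definedness via the triangle configurations, is exactly the paper's construction. The final upgrade (orthogonality of $T$ on vectors, equality of norms, automorphism fixed to identity or conjugation, $T=cU$) is also essentially the argument used in Theorem~\ref{main1} and is fine. The genuine gap is in the central step, where you produce the semilinear map $T$ by ``invoking the structure theory.'' Blunck--Havlicek \cite{BlH} is a theorem about \emph{bijections} preserving complementarity; no surjectivity is available here (the paper only uses \cite{BlH} in the $\dim H=2n$ case, after proving bijectivity by a compactness/invariance-of-domain argument, which has no analogue when $\dim H=\infty$). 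The fundamental theorem of projective geometry route needs $\psi$ to preserve collinearity (or to be a morphism in the appropriate non-bijective sense), and this does not follow from the two properties you list for $\psi$ ($[x]\subset U\Rightarrow\psi([x])\subset\phi(U)$ and orthogonality preservation); the ``chain of triangles'' remark addresses well-definedness, not collinearity. So as written, the step you yourself flag as the main obstacle is not closed.

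What the paper does instead is to use $\psi$ for a different purpose: it proves that $\phi$ preserves \emph{adjacency}. If $\dim(U\cap V)=n-1$, pick orthonormal $x_1,\ldots,x_{n-1}\in U\cap V$; then $\psi([x_1]),\ldots,\psi([x_{n-1}])$ are pairwise orthogonal lines contained in $\phi(U)\cap\phi(V)$, so $\dim(\phi(U)\cap\phi(V))\ge n-1$, while injectivity of $\phi$ gives $\dim(\phi(U)\cap\phi(V))\le n-1$. With orthogonality and adjacency preservation in hand, Pankov's Theorem~1 in \cite{Pan} --- which is stated for not necessarily surjective maps --- yields directly that $\phi$ is induced by a linear or conjugate-linear isometry, so no separate semilinear-to-isometry upgrade is needed. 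If you insert this adjacency step, your outline collapses onto the paper's proof; without it (or without an honest verification of the hypotheses of some non-bijective Chow-type theorem), the argument is incomplete.
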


\begin{proof}
If we consider elements of $P_n (H)$ as $n$-dimensional subspaces of $H$ then we easily see that all the assumptions of Lemma \ref{simulant} are satisfied. We define a new map $\psi : P_1 (H) \to P_1 (H)$ in the following way. For any unit vector $x \in H$ we can find $U,V \in P_n (H)$ such that 
$U \sharp V$ and $U \cap V = [x]$. Then, by Lemma \ref{simulant} we know that $\phi (U) \sharp \phi (V)$ and we define
$$
\psi ([x]) = \phi (U) \cap \phi (V).
$$ 
We first need to check that $\psi$ is well-defined. Assume that $U,V,W,Z \in P_n (H)$ are subspaces such that 
$$
U \sharp V \ \ \ {\rm and} \ \ \ U \cap V = [x]
$$
and
$$
W \sharp Z \ \ \ {\rm and} \ \ \ W \cap Z = [x].
$$
Because $H$ is infinite-dimensional we can easily find subspaces $L,M \in P_n (H)$ such that
\begin{itemize}
\item any two distinct elements of the set $\{ U,V,L,M \}$ are 1-orthogonal and the intersection of any two distinct elements of the set $\{ U,V,L,M \}$ is equal to $[x]$; and
\item any two distinct elements of the set $\{ W,Z,L,M \}$ are 1-orthogonal and the intersection of any two distinct elements of the set $\{ W,Z,L,M \}$ is equal to $[x]$.
\end{itemize}
It is now a straightforward consequence of 
Lemma \ref{simulant} that
$$
\phi (U) \cap \phi (V) = \phi (W) \cap \phi (Z),
$$
as desired.

It is trivial to see that for any unit vectors $x,y \in H$ and any subspace $U \in P_n (H)$ we have
$$
[x] \subset U \Rightarrow \psi ([x]) \subset \phi (U)
$$
and
$$
[x] \perp [y] \Rightarrow \psi ([x]) \perp \psi ([y]).
$$

Recall that subspaces $U,V \subset P_n (H)$ are said to be adjacent if $\dim (U \cap V) = n-1$, or equivalently, $\dim (U+V) = n+1$. We claim that if $U,V \in P_n (H)$ are adjacent, then $\phi (U)$ and $\phi (V)$ are adjacent. Indeed, if $U$ and $V$ are adjacent then we can find pairwise orthogonal unit vectors $x_1 , \ldots , x_{n-1} \in U \cap V$. It follows that $\psi ([x_1]), \ldots , \psi ([x_{n-1}])$ are pairwise orthogonal one-dimensional subspaces of $\phi (U) \cap \phi (V)$ implying that $\dim (\phi (U) \cap \phi (V) ) \ge n-1$. But we know that $\phi$ is injective and therefore $\phi (U) \not= \phi (V)$ which yields that
$\dim (\phi (U) \cap \phi (V) ) \le n-1$. Thus, $\phi (U)$ and $\phi (V)$ are adjacent. 

We complete the proof using Pankov's result \cite[Theorem 1]{Pan} stating that every orthogonality and adjacency preserving map $\phi : P_n (H) \to P_n (H)$ is induced by a linear or conjugate-linear isometry. 
\end{proof}

\bigskip
\bigskip
\noindent
There is no conflict of interest. The manuscript has no associated data.

\end{document}